\documentclass[12pt,reqno]{amsart}

\setlength{\textheight}{23cm}
\setlength{\textwidth}{16.5cm}
\setlength{\topmargin}{-0.8cm}
\setlength{\parskip}{0.3\baselineskip}
\hoffset=-1.9cm

\usepackage{amssymb}

\newtheorem{theorem}{Theorem}[section]
\newtheorem{proposition}[theorem]{Proposition}
\newtheorem{lemma}[theorem]{Lemma}
\newtheorem{corollary}[theorem]{Corollary}

\theoremstyle{definition}
\newtheorem{definition}[theorem]{Definition}
\newtheorem{remark}[theorem]{Remark}

\numberwithin{equation}{section}

\newcommand{\PP}{\ensuremath{\mathbb{P}}}

\begin{document}

\baselineskip=15pt

\title[Nonabelian Hodge theory for class $\mathcal C$ manifolds]{Nonabelian Hodge
theory for Fujiki class $\mathcal C$ manifolds}

\author[I. Biswas]{Indranil Biswas}

\address{School of Mathematics, Tata Institute of Fundamental
Research, Homi Bhabha Road, Mumbai 400005, India}

\email{indranil@math.tifr.res.in}

\author[S. Dumitrescu]{Sorin Dumitrescu}

\address{Universit\'e C\^ote d'Azur, CNRS, LJAD, France}

\email{dumitres@unice.fr}

\subjclass[2010]{32G13, 53C07, 58D27, 14E05}

\keywords{Nonabelian Hodge theory, flat connection, Higgs bundle, Fujiki class $\mathcal C$ manifold}

\date{}
\begin{abstract}
The nonabelian Hodge correspondence (also known as the Corlette-Simpson 
correspondence), between the polystable Higgs bundles with vanishing Chern classes
on a compact K\"ahler manifold $X$ and the completely reducible flat connections on
$X$, is extended to the Fujiki class $\mathcal C$ manifolds.
\end{abstract}

\maketitle

\section{Introduction}

Given a compact K\"ahler manifold $X$, foundational works of Simpson and Corlette, \cite{Si1}, 
\cite{Co} establish a natural equivalence between the category of local systems over $X$ and
the category of certain 
analytical objects called {\it Higgs bundles} that consist of a holomorphic vector bundle $V$ 
over $X$ together with a holomorphic section $ \theta\, \in\, H^0(X,\, 
\text{End}(V)\otimes\Omega^1_X)$ such that the section $\theta\bigwedge\theta \, \in\, 
H^0(X,\, \text{End}(V)\otimes\Omega^2_X)$ vanishes identically (see also \cite{Si2}). Such a
section $\theta$ is called a {\it Higgs field} on $V$.

While local systems are topological objects on $X$, which correspond to the flat vector bundles 
(or, equivalently, correspond to the equivalence classes of representations of the fundamental group of 
$X$), the Higgs bundles on $X$ are holomorphic objects. There are notions of stability and 
polystability for Higgs bundles which are analogous to the corresponding notions for 
holomorphic vector bundles on $X$, with the difference being that the class of subsheaves
is restricted to only those that are 
invariant under the Higgs field (see Section \ref{s2}). So the (semi)stability condition 
generalizes the (semi)stability of holomorphic vector bundles introduced by Mumford
in the context of geometric invariant theory which he developed.

A more precise statement of the above mentioned equivalence of categories between Higgs bundles and local
systems on $X$ says that there is a natural correspondence between the
completely reducible local systems on $X$ and the
polystable Higgs bundles on $X$ with vanishing rational Chern classes. It may be mentioned that
for polystable Higgs bundles, the vanishing of the first
two rational Chern classes implies the vanishing of all rational Chern classes. This correspondence
is constructed via a Hermitian metric on $V$ that satisfies the Yang--Mills--Higgs equation for
a polystable $(V,\, \theta)$, \cite{Si1}, and a harmonic metric on a vector bundle on $X$
equipped with a completely reducible flat connection \cite{Co}.
The construction of these canonical metrics can be seen as a vast 
generalization of Hodge Theorem on existence of harmonic forms, and for this reason
the above correspondence is also called a ``nonabelian Hodge theorem''.

The aim here is to extend this Corlette-Simpson (nonabelian Hodge) correspondence 
to the more general context of compact Fujiki class $\mathcal C$ manifolds;
see Theorem \ref{thm2}.

Recall that a manifold $M$ is in Fujiki class $\mathcal C$ if it is the image of 
a K\"ahler manifold through a holomorphic map \cite{Fu2}, or, equivalently, $M$ is bimeromorphic to a 
compact K\"ahler manifold \cite{Va} (see Section \ref{s2.2}). The proof of Theorem \ref{thm2} 
uses a well--known functoriality property of Corlette-Simpson correspondence (see Theorem 
\ref{thm1}) and a descent result (see Proposition 
\ref{propositionpullback}) which is inspired by Theorem 1.2 in \cite{GKPT}.

\section{Representations, Higgs bundles and Fujiki class $\mathcal C$ manifolds}

\subsection{Nonabelian Hodge theory}\label{s2}

Let $X$ be a compact connected complex manifold. Fix a base point $x_0\, \in\, X$ to
define the fundamental group $\pi_1(X,\, x_0)$ of $X$. Take a positive integer $r$,
and consider any homomorphism
$$
\rho\, :\, \pi_1(X,\, x_0)\, \longrightarrow\, \text{GL}(r,{\mathbb C})\, .
$$
The homomorphism $\rho$ is called \textit{irreducible} if the standard action of $\rho(\pi_1(X,\, x_0))$
on ${\mathbb C}^r$ does not preserve any nonzero proper subspace of ${\mathbb C}^r$.
The homomorphism $\rho$ is called \textit{completely reducible} if it is a direct sum
of irreducible representations.

Two homomorphisms $\rho_1,\, \rho_2\, :\, \pi_1(X,\, x_0)\, \longrightarrow\, \text{GL}(r,{\mathbb C})$ 
are called \textit{equivalent} if there is an element $g\, \in\, \text{GL}(r,{\mathbb C})$ such
that
$$
\rho_1(\gamma)\,=\, g^{-1} \rho_2(\gamma)g
$$
for all $\gamma\, \in\, \pi_1(X,\, x_0)$. Clearly, this equivalence relation preserves irreducibility
and complete reducibility. The space of equivalence classes of completely reducible
homomorphisms from $\pi_1(X,\, x_0)$ to $\text{GL}(r,{\mathbb C})$ has the structure of an affine
scheme defined over $\mathbb C$, which can be seen as follows. Since $X$ is compact, $\pi_1(X,\, x_0)$ is a finitely
presented group; $\text{GL}(r,{\mathbb C})$ being an affine algebraic group, the space of all homomorphisms
$\text{Hom}(\pi_1(X,\, x_0),\, \text{GL}(r,{\mathbb C}))$ is a complex affine scheme. The adjoint action
of $\text{GL}(r,{\mathbb C})$ on $\text{Hom}(\pi_1(X,\, x_0),\, \text{GL}(r,{\mathbb C}))$ produces an action
of $\text{GL}(r,{\mathbb C})$ on $\text{Hom}(\pi_1(X,\, x_0),\, \text{GL}(r,{\mathbb C}))$. The geometric
invariant theoretic quotient $\text{Hom}(\pi_1(X,\, x_0),\, \text{GL}(r,{\mathbb C}))/\!\!/\text{GL}(r,{\mathbb C})$
is the moduli space of equivalence classes of completely reducible
homomorphisms from $\pi_1(X,\, x_0)$ to $\text{GL}(r,{\mathbb C})$; see \cite{Si3}, \cite{Si4}.
Let ${\mathcal R}(X, r)$ denote this moduli space of equivalence classes of completely reducible
homomorphisms from $\pi_1(X,\, x_0)$ to $\text{GL}(r,{\mathbb C})$. It is known as the
Betti moduli space.

A homomorphism $\rho\, :\, \pi_1(X,\, x_0)\, \longrightarrow\, \text{GL}(r,{\mathbb C})$ 
produces a holomorphic vector bundle $E$ on $X$ of rank $r$ equipped with a flat holomorphic 
connection, together with a trivialization of the fiber $E_{x_0}$. Equivalence classes of 
such homomorphisms correspond to holomorphic vector bundles of rank $r$ equipped with a flat 
holomorphic connection; this is an example of Riemann--Hilbert correspondence. A connection 
$\nabla$ on a vector bundle $E$ is called irreducible if there is no subbundle $0\, \not=\, 
F\, \subsetneq E$ such that $\nabla$ preserves $F$. A connection $\nabla$ on a vector bundle 
$E$ is called completely reducible if
$$
(E,\,\nabla)\,=\, \bigoplus_{i=1}^N(E_i,\,\nabla^i)\, ,
$$
where each $\nabla^i$ is an irreducible connection on $E_i$. We note that irreducible (respectively,
completely reducible) flat connections of rank $r$ on $X$
correspond to irreducible (respectively, completely reducible)
equivalence classes of homomorphisms from $\pi_1(X,\, x_0)$ to $\text{GL}(r,{\mathbb C})$.

A \textit{Higgs field} on a holomorphic vector bundle $V$ on $X$ is a holomorphic section
$$
\theta\, \in\, H^0(X,\, \text{End}(V)\otimes\Omega^1_X)
$$
such that the section $\theta\bigwedge\theta \, \in\, H^0(X,\, \text{End}(V)\otimes\Omega^2_X)$
vanishes identically \cite{Si1}, \cite{Si2}.
If $(z_1, \,\ldots,\, z_d)$ are local holomorphic coordinates on $X$ with respect to which the 
local expression of the section $\theta$ is $\sum_i \theta_i\otimes dz_i$, with $\theta_i$ being 
locally defined holomorphic endomorphisms of $V$, the above integrability condition $\theta\bigwedge\theta 
\,=\,0$ is equivalent to the condition that $[\theta_i,\, \theta_j]\,=\, 0$ for all $i,\, j$.

A \textit{Higgs bundle} on $X$ is a holomorphic vector bundle on $X$
together with a Higgs field on it. A homomorphism of Higgs bundles $(V_1,\, \theta_1)\, \longrightarrow\,
(V_2,\, \theta_2)$ is a holomorphic homomorphism
$$
\Psi\, :\, V_1\, \longrightarrow\, V_2
$$
such that $\theta_2\circ\Psi\,=\, (\Psi\otimes \text{Id}_{\Omega^1_X})\circ\theta_1$ as homomorphisms from
$V_1$ to $V_2\otimes\Omega^1_X$.

Assume now that $X$ is K\"ahler, and fix a K\"ahler form $\omega$ on $X$. The \textit{degree} 
of a torsionfree coherent analytic sheaf $F$ on $X$ is defined to be
$$
\text{degree}(F)\, :=\, \int_X c_1(\det F)\wedge \omega^{d-1}\, \in\, \mathbb R\, ,
$$
where $d\,=\, \dim_{\mathbb C} X$; see \cite[Ch.~V, \S~6]{Ko} (also Definition 1.34 in \cite{Br})
for determinant line bundle $\det F$. The number
$$
\mu(F)\, :=\, \frac{\text{degree}(F)}{\text{rank}(F)}\, \in\, \mathbb R
$$
is called the \textit{slope} of $F$.

A Higgs bundle $(V,\, \theta)$ on $X$ is called \textit{stable} (respectively, \textit{semistable})
if for every coherent analytic subsheaf $F\, \subset\, V$ with $0\, <\, \text{rank}(F)\,<\,
\text{rank}(V)$ and $\theta(F)\, \subset\, F\otimes\Omega^1_X$, the inequality 
$$
\mu(F)\, <\, \mu(V) \ \ {\rm (respectively,}\ \mu(F)\, \leq\, \mu(V){\rm )}
$$
holds. A Higgs bundle $(V,\, \theta)$ is called \textit{polystable} if it is semistable and
a direct sum of stable Higgs bundles. To verify the stability (or semistability) condition it suffices to
consider coherent analytic subsheaves $F\, \subsetneq\, V$ such that the quotient $V/F$ is
torsionfree \cite[Ch.~V, Proposition 7.6]{Ko}. These subsheaves are 
reflexive (see \cite[Ch.~V, Proposition 5.22]{Ko}).

\begin{theorem}[{\cite{Si1}, \cite{Co}, \cite{Si2}}]\label{thm0}
There is a natural equivalence of categories between the following two:
\begin{enumerate}
\item The objects are completely reducible flat complex connections on $X$, and morphisms are connection
preserving homomorphisms.

\item Objects are polystable Higgs bundles $(V,\, \theta)$ on $X$
such that $c_1(V)\,=\, 0\, =\, c_2(V)$, where $c_i$ denotes the $i$--th Chern class with coefficient in
$\mathbb Q$; the morphisms are homomorphisms of Higgs bundles.
\end{enumerate}
\end{theorem}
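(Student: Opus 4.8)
The plan is to build the equivalence from two mutually inverse functors, each produced by solving a canonical metric equation, and then to check compatibility on morphisms. The two functors run in opposite directions between completely reducible flat connections and polystable Higgs bundles, and the bridge between them is a single Hermitian metric that simultaneously controls the flat structure and the holomorphic-plus-Higgs structure.

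First I would treat the direction from flat connections to Higgs bundles. Given a completely reducible flat connection $(E,\, \nabla)$, equip $E$ with a smooth Hermitian metric $h$ and write $\nabla\,=\, d_h+\psi$, where $d_h$ is the unique $h$--unitary connection associated to $\nabla$ and $\psi$ is a $1$--form valued in $h$--self-adjoint endomorphisms. Splitting into types gives $d_h\,=\, \partial_h+\bar\partial_h$ and $\psi\,=\, \theta+\theta^*$. The key analytic input (Corlette's theorem) is that complete reducibility of $\nabla$ guarantees the existence of a \emph{harmonic} metric $h$, i.e. one for which the tautological map to the symmetric space $\text{GL}(r,{\mathbb C})/\text U(r)$ is harmonic, equivalently $d_h^*\psi\,=\, 0$. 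For such $h$ the flatness of $\nabla$ forces $\bar\partial_h^2\,=\, 0$, so $\bar\partial_h$ defines a holomorphic structure $V$ on $E$, and $\theta$ becomes a holomorphic section of $\text{End}(V)\otimes\Omega^1_X$ with $\theta\bigwedge\theta\,=\, 0$; one then checks that $(V,\, \theta)$ is polystable with $c_1\,=\, 0\,=\, c_2$.

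Next, the reverse direction. Given a polystable Higgs bundle $(V,\, \theta)$ with $c_1(V)\,=\, 0\,=\, c_2(V)$, the central analytic input (Hitchin--Simpson) is the existence of a Hermitian metric $h$ solving the Yang--Mills--Higgs equation, whose solvability is guaranteed precisely by polystability. Forming $\nabla\,=\, d_h+\theta+\theta^*$ with $d_h$ the Chern connection of $(V,\, h)$, the vanishing of the first two Chern classes upgrades the Hermitian--Einstein condition to genuine flatness of $\nabla$, while polystability translates into complete reducibility. To see that the two assignments are inverse, I would observe that the harmonic metric and the Yang--Mills--Higgs metric coincide, since both are characterized by the same self-duality equation and are unique on each completely reducible (respectively polystable) object; functoriality then follows because a connection-preserving (respectively Higgs) homomorphism between such objects is automatically compatible with the canonical metrics and hence respects all the induced structure.

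The hard part is entirely analytic: establishing existence of the two canonical metrics. Corlette's existence of a harmonic metric rests on showing that an energy functional on the space of metrics is proper modulo the relevant group action, a property that fails without the reductivity hypothesis and is exactly where complete reducibility enters. Hitchin--Simpson's existence of a solution of the Yang--Mills--Higgs equation requires the full continuity-method or heat-flow machinery and uses stability essentially through a Donaldson-type functional. Granting these two existence theorems, the remaining identifications — that the two canonical metrics agree and that the constructions are functorial and mutually inverse — are formal consequences of the uniqueness of the canonical metric on each object.
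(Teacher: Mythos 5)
Your proposal is correct and follows exactly the route the paper attributes to the cited sources: the paper states Theorem~\ref{thm0} as a known result of Simpson and Corlette without proof, and the mechanism it describes (a harmonic metric on a completely reducibly flat bundle in one direction, a Hermitian metric solving the Yang--Mills--Higgs equation $K(\nabla^h)+[\theta,\theta^*_h]=0$ in the other, with the two canonical metrics agreeing) is precisely your outline. The only imprecision worth noting is that the integrability identities $\bar\partial_h^2=0$, $\bar\partial_h\theta=0$, $\theta\wedge\theta=0$ do not follow from flatness alone once $h$ is harmonic --- one needs the Siu--Sampson Bochner argument on the K\"ahler manifold to upgrade harmonicity to pluriharmonicity --- but this is subsumed in the analytic inputs you are already citing.
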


In \cite{Si2}, the conditions on the Chern classes of the polystable Higgs bundle $(V,\, \theta)$ are
$\text{degree}(V)\,=\,0\, =\, (ch_2(V)\cup [\omega^{d-2}])\cap [X]$, instead of the above conditions
$c_1(V)\,=\, 0\, =\, c_2(V)$. However, since the existence of flat connection
on a complex vector bundle implies that all its rational Chern classes vanish, these two sets of
conditions are equivalent in the given context.

\begin{remark}\label{rem1}
Recall that the notion of (poly)stability depends on the choice of the K\"ahler class
of $\omega$. In the case of vanishing first two Chern classes, these notions are actually independent
of the class of $\omega$. In fact, given an equivalence class of completely reducible 
homomorphism $\rho\, :\, \pi_1(X,\, x_0)\, \longrightarrow\, \text{GL}(r,{\mathbb C})$, the 
Higgs bundle $(V,\, \theta)$ of rank $r$ associated to $\rho$ by the equivalence of 
categories in Theorem \ref{thm0} is in fact independent of the choice of $\omega$. Indeed, the
local system $\rho$ is obtained from $(V,\, \theta)$ by constructing a Hermitian metric $h$ on
$V$ that satisfies the Yang--Mills--Higgs equation
$K(\nabla^h) + \lbrack \theta ,\, \theta_h^* \rbrack\,=\,0$, where $K(\nabla^h)$ is the
curvature of the Chern connection $\nabla^h$ on $V$ corresponding to $h$, and $\theta_h^*$ is the
adjoint of $\theta$ with respect to $h$. Since this Yang--Mills--Higgs equation does not 
depend on the K\"ahler form $\omega$, the flat connection corresponding to $(V,\, \theta)$
is independent of $\omega$.
\end{remark}

We recall a basic property of the equivalence of categories in Theorem \ref{thm0}.

\begin{theorem}[{\cite{Si2}}]\label{thm1}
Let $X$ and $X_1$ be compact connected K\"ahler manifolds, and let
$$\beta\, :\, X_1\, \longrightarrow\, X$$
be any holomorphic map. Let $(E,\, \nabla)$ be a completely reducible flat connection
on $X$, and let $(V,\, \theta)$ be the corresponding polystable Higgs bundle on $X$
with $c_1(V)\,=\, 0\, =\, c_2(V)$. Then the pulled back Higgs bundle $(\beta^*V,\, \beta^*\theta)$
is polystable, and, moreover, the flat connection corresponding to it coincides with
$(\beta^*E,\, \beta^*\nabla)$.
\end{theorem}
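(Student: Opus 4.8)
The plan is to prove both assertions at once by transporting the harmonic metric along $\beta$ and then invoking the converse direction of the correspondence in Theorem \ref{thm0}. The starting observation, already recorded in Remark \ref{rem1}, is that the metric realizing the correspondence for $(V,\, \theta)$ solves the equation
$$
K(\nabla^h) + [\theta,\, \theta_h^*]\,=\, 0\, ,
$$
which involves no contraction against a K\"ahler form and is therefore entirely independent of the choice of $\omega$. This is exactly what makes a pullback argument feasible, since $\beta$ need not be compatible with the chosen K\"ahler metrics in any metric sense.

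First I would fix the harmonic metric $h$ on $V$ furnished by Theorem \ref{thm0} applied to $(E,\, \nabla)$. As $C^\infty$ bundles one has $V\,\cong\, E$, and the flat connection is recovered as $\nabla\,=\, \nabla^h + \theta + \theta_h^*$, where $\nabla^h$ is the Chern connection of $h$ and $\theta_h^*$ is the $h$--adjoint of $\theta$. I would then pull everything back by $\beta$ and consider the Hermitian metric $\beta^* h$ on $\beta^* V$. The key point is that pullback commutes with each operation entering the Yang--Mills--Higgs equation: the Chern connection of $\beta^* h$ is $\beta^* \nabla^h$, so $K(\nabla^{\beta^* h})\,=\, \beta^* K(\nabla^h)$; the adjoint satisfies $(\beta^*\theta)_{\beta^* h}^*\,=\, \beta^*(\theta_h^*)$; and $\beta^*\theta$ is a holomorphic section of $\text{End}(\beta^* V)\otimes\Omega^1_{X_1}$ with $(\beta^*\theta)\wedge(\beta^*\theta)\,=\, \beta^*(\theta\wedge\theta)\,=\, 0$, hence a genuine Higgs field. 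Applying $\beta^*$ to the displayed equation then gives
$$
K(\nabla^{\beta^* h}) + [\beta^*\theta,\, (\beta^*\theta)_{\beta^* h}^*]\,=\, \beta^*\big(K(\nabla^h) + [\theta,\, \theta_h^*]\big)\,=\, 0\, ,
$$
so $\beta^* h$ solves the Yang--Mills--Higgs equation for the Higgs bundle $(\beta^* V,\, \beta^*\theta)$.

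With this in hand I would invoke the converse implication in Simpson's correspondence: a Higgs bundle on a compact K\"ahler manifold admitting a Hermitian metric $h'$ with $K(\nabla^{h'}) + [\theta',\, (\theta')_{h'}^*]\,=\, 0$ is automatically polystable with $c_1\,=\, 0\,=\, c_2$, and the associated flat connection is $\nabla^{h'} + \theta' + (\theta')_{h'}^*$. Applied to $(\beta^* V,\, \beta^*\theta)$ with the metric $\beta^* h$ on the K\"ahler manifold $X_1$, this yields polystability at once, and the corresponding flat connection is
$$
\nabla^{\beta^* h} + \beta^*\theta + (\beta^*\theta)_{\beta^* h}^*\,=\, \beta^*\big(\nabla^h + \theta + \theta_h^*\big)\,=\, \beta^*\nabla\, ,
$$
acting on $\beta^* E\,=\, \beta^* V$, which is precisely the claimed identification.

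The step I expect to be the main obstacle is the polystability of $\beta^* V$. One cannot establish this by inspecting $\theta$--invariant subsheaves directly, because pullbacks of (poly)stable bundles along an arbitrary holomorphic map need not remain stable, and the relevant slope inequalities can genuinely fail. The whole purpose of routing through the metric is to sidestep this difficulty: polystability is obtained for free from the existence of a solution to the K\"ahler--class--independent Yang--Mills--Higgs equation, rather than from a stability computation. It then remains only to note that $c_1(\beta^* V)\,=\, 0\,=\, c_2(\beta^* V)$, which is immediate since $\beta^* E$ carries the flat connection $\beta^*\nabla$ and the existence of a flat connection forces all rational Chern classes to vanish; by Remark \ref{rem1} the resulting polystability is moreover independent of the K\"ahler class on $X_1$.
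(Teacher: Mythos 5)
Your proof is correct and is essentially the argument the paper sketches after the theorem statement: both rest on transporting the canonical pluriharmonic object along $\beta$ and on the observation (Remark \ref{rem1}) that the uncontracted Yang--Mills--Higgs equation involves no K\"ahler form, so it is preserved by holomorphic pullback. The only difference is one of phrasing --- the paper works with the $\rho$-equivariant harmonic map $\alpha\,:\,\widetilde X\,\longrightarrow\,\mathrm{GL}(r,\mathbb C)/\mathrm{U}(r)$ and composes it with $\widetilde\beta$, while you pull back the equivalent datum of the harmonic metric $h$ on $V$; these are the same object viewed from the two sides of the correspondence.
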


To explain Theorem \ref{thm1}, take any completely reducible homomorphism
$\rho\, :\, \pi_1(X,\, x_0)\, \longrightarrow\,\text{GL}(r, {\mathbb C})$.
Let $\alpha\, :\, \widetilde{X}\, \longrightarrow\, \text{GL}(r, {\mathbb C})/{\rm U}(r)$ be
the $\rho$-equivariant harmonic map defined on the universal cover of the K\"ahler
manifold $(X,\, x_0)$. Then $\alpha\circ\widetilde{\beta}$ is $\beta^*\rho$-equivariant
harmonic, where $\widetilde \beta$ is the lift of the map $\beta$ in Theorem \ref{thm1}
to a universal covering of $X_1$. Also, as noted in Remark \ref{rem1}, the Yang--Mills--Higgs equation
for $(V,\,\theta)$ does not depend on the K\"ahler form $\omega$. Theorem \ref{thm1}
follows from these facts.

Note that Remark \ref{rem1} can be seen as a particular case of Theorem \ref{thm1} by setting 
$\beta$ to be the identity map of $X$ equipped with two different K\"ahler structures.

A useful particular case of Theorem \ref{thm1} is the following:

\begin{corollary}\label{cor1}
Let the map $\beta$ in Theorem \ref{thm1} be such that the corresponding homomorphism of fundamental groups
$$
\beta_*\, :\, \pi_1(X_1,\, x_1)\, \longrightarrow\, \pi_1(X,\, \beta(x_1))
$$
is trivial. For any polystable Higgs bundle $(V,\, \theta)$ on $X$ of rank $r$ with $c_1(V)\,=\, 0\, =\, c_2(V)$,
\begin{enumerate}
\item $\beta^*V\,=\, {\mathcal O}^{\oplus r}_{X_1}$, and

\item $\beta^*\theta\,=\, 0$.
\end{enumerate}
\end{corollary}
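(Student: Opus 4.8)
The plan is to deduce the statement directly from the functoriality in Theorem \ref{thm1}, combined with the behaviour of monodromy under pullback. First I would set $x_0\, :=\, \beta(x_1)$ and use Theorem \ref{thm0} to pass from the polystable Higgs bundle $(V,\, \theta)$ with $c_1(V)\,=\,0\,=\,c_2(V)$ to the associated completely reducible flat connection $(E,\, \nabla)$ on $X$, whose monodromy is a completely reducible homomorphism $\rho\, :\, \pi_1(X,\, x_0)\, \longrightarrow\, \text{GL}(r,{\mathbb C})$. This is exactly the object Theorem \ref{thm1} is phrased for.

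The key observation is that the flat connection $(\beta^*E,\, \beta^*\nabla)$ on $X_1$ has monodromy homomorphism equal to the composition $\rho\circ\beta_*\, :\, \pi_1(X_1,\, x_1)\, \longrightarrow\, \text{GL}(r,{\mathbb C})$; this is the functoriality of the Riemann--Hilbert correspondence recalled in Section \ref{s2}. By hypothesis $\beta_*$ is trivial, so $\rho\circ\beta_*$ sends every element of $\pi_1(X_1,\, x_1)$ to the identity, i.e.\ $(\beta^*E,\, \beta^*\nabla)$ is a flat bundle with trivial monodromy. Its flat sections are therefore globally defined and single valued, and they trivialise the bundle, giving $(\beta^*E,\, \beta^*\nabla)\,\cong\, ({\mathcal O}^{\oplus r}_{X_1},\, d)$, the trivial holomorphic bundle equipped with the trivial (de Rham) flat connection.

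It then remains to identify the polystable Higgs bundle that Theorem \ref{thm0} attaches to this trivial flat connection. For the flat metric the Chern connection is already flat and unitary, and the self-adjoint part of $\nabla$ vanishes, so the Higgs field produced by the correspondence is zero and the holomorphic structure is the trivial one; equivalently, the Higgs bundle associated with $({\mathcal O}^{\oplus r}_{X_1},\, d)$ is $({\mathcal O}^{\oplus r}_{X_1},\, 0)$. Now Theorem \ref{thm1} asserts that $(\beta^*V,\, \beta^*\theta)$ is precisely the polystable Higgs bundle corresponding to $(\beta^*E,\, \beta^*\nabla)$. Since the correspondence of Theorem \ref{thm0} is an equivalence of categories, hence a bijection on isomorphism classes, I conclude $(\beta^*V,\, \beta^*\theta)\,\cong\, ({\mathcal O}^{\oplus r}_{X_1},\, 0)$, which gives both $\beta^*V\,\cong\, {\mathcal O}^{\oplus r}_{X_1}$ and $\beta^*\theta\,=\, 0$.

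I do not expect any genuine obstacle here, as the argument is a formal consequence of Theorem \ref{thm1}; the only points needing care are the bookkeeping of base points under $\beta$ and the verification that the trivial flat connection corresponds to the trivial Higgs bundle, both of which are routine. Alternatively, one could bypass the Higgs-theoretic identification and argue via the equivariant harmonic map discussed after Theorem \ref{thm1}: since $\beta^*\rho$ is trivial, the composition $\alpha\circ\widetilde{\beta}$ reduces to a constant map, forcing the associated Higgs field to vanish and the underlying holomorphic bundle to be trivial.
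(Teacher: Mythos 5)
Your argument is correct and is precisely the reasoning the paper intends: the paper states Corollary \ref{cor1} as an immediate particular case of Theorem \ref{thm1}, the point being exactly that $\beta^*\rho=\rho\circ\beta_*$ is trivial, so $(\beta^*E,\beta^*\nabla)$ is the trivial flat bundle, whose associated Higgs bundle under the correspondence of Theorem \ref{thm0} is $({\mathcal O}^{\oplus r}_{X_1},0)$. Nothing in your write-up deviates from or adds a gap to this; the two routine verifications you flag (base points, trivial connection $\leftrightarrow$ trivial Higgs bundle) are indeed the only points to check.
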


The equivalence of categories in Theorem \ref{thm0}, between the equivalence classes of completely reducible 
flat connections on $X$ and the polystable Higgs bundles $(V,\, \theta)$ on $X$ such that 
$c_1(V)\,=\, 0\, =\, c_2(V)$, extend to the context of principal $G$--bundles, where $G$ is 
any complex affine algebraic group \cite{BG}.

Let ${\mathcal H}(X,r)$ denote the moduli space of polystable Higgs bundles $(V,\, \theta)$ 
of rank $r$ on $X$ such that $c_1(V)\,=\, 0\, =\, c_2(V)$, where $c_i$ denotes the rational 
$i$--th Chern class. It is canonically homeomorphic to the earlier defined moduli space 
${\mathcal R}(X, r)$ (of equivalence classes of completely reducible homomorphisms from 
$\pi_1(X,\, x_0)$ to $\text{GL}(r,{\mathbb C})$). However the complex structure of these
two moduli spaces are different in general.

Now assume that $X$ is a smooth projective variety defined over $\mathbb C$. Simpson proved
the following two results in \cite{Si2}.

\begin{theorem}[{\cite{Si2}}]\label{thms}
There is an equivalence of categories between the following two:
\begin{enumerate}
\item The objects are flat complex connections on $X$, and morphisms are connection
preserving homomorphisms.

\item Objects are semistable Higgs bundles $(V,\, \theta)$ on $X$ such that $c_1(V)
\,=\, 0\, =\, c_2(V)$, where $c_i$ denotes the $i$--th Chern class with coefficient
in $\mathbb Q$; the morphisms are homomorphisms of Higgs bundles.
\end{enumerate}
\end{theorem}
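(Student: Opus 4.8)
The plan is to bootstrap from Theorem~\ref{thm0}, which already gives the correspondence on the \emph{semisimple} objects of both sides, to an equivalence of the full abelian categories. First I would verify that both categories in the statement are $\mathbb{C}$-linear abelian categories of finite length. On the de Rham side this is clear from the identification of flat connections with $\mathrm{Rep}(\pi_1(X,x_0))$. On the Higgs side one uses the standard fact that the semistable Higgs bundles of slope $0$ with $c_1=0=c_2$ form an abelian category: the kernel, image and cokernel of a morphism between two such objects are again semistable of slope $0$, and the vanishing of $c_1$ and $c_2$ is inherited by subobjects and quotients. In both categories the simple objects are precisely the irreducible flat connections, respectively the stable Higgs bundles with $c_1=0=c_2$, and Theorem~\ref{thm0} restricts to a bijection between these two families of simple objects.

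Next I would promote this to the full categories by transporting extension data. Every object on either side carries a Jordan--H\"older filtration with simple graded pieces, so both categories are generated under iterated extensions by their simple objects. Given a semistable Higgs bundle $(V,\theta)$ with $c_1=0=c_2$, I would apply Theorem~\ref{thm0} to its polystable semisimplification $\mathrm{gr}(V,\theta)$ to obtain a completely reducible flat connection, and then reconstruct a flat connection with the correct associated graded by transporting the class of $(V,\theta)$ under an isomorphism of extension groups. Concretely, if $E_1\leftrightarrow(V_1,\theta_1)$ and $E_2\leftrightarrow(V_2,\theta_2)$ correspond under Theorem~\ref{thm0}, then the tensor/Hom functoriality of the Corlette--Simpson correspondence identifies the flat bundle $\mathcal{H}om(E_1,E_2)$ with the Higgs bundle $\mathcal{H}om\big((V_1,\theta_1),(V_2,\theta_2)\big)$, both again semisimple, and the task reduces to proving
$$
\mathrm{Ext}^i_{\mathrm{flat}}(E_1,E_2)\,=\,H^i_{\mathrm{dR}}\big(X,\,\mathcal{H}om(E_1,E_2)\big)\ \cong\ \mathbb{H}^i\big(\mathrm{Dol}\,\mathcal{H}om((V_1,\theta_1),(V_2,\theta_2))\big)\,=\,\mathrm{Ext}^i_{\mathrm{Higgs}}\big((V_1,\theta_1),(V_2,\theta_2)\big)
$$
compatibly with Yoneda composition; the cases $i=0,1$ give full faithfulness and the matching of extensions, from which essential surjectivity and hence the equivalence follow.

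The main obstacle is exactly this comparison of de Rham and Dolbeault cohomology, and it is where the analysis enters. The harmonic metric provided by Theorem~\ref{thm0} for the semisimple bundle $\mathcal{H}om$ decomposes its flat connection as $\nabla=(\partial+\bar\partial)+(\theta+\theta^*)$, and the K\"ahler identities for harmonic bundles give $\Delta_{\nabla}=2\,\Delta_{\bar\partial+\theta}$. Hodge theory then identifies the two spaces of harmonic forms, yielding the isomorphism of cohomologies above; the same harmonic-theoretic principle (formality: the de Rham and Dolbeault complexes of a harmonic bundle are quasi-isomorphic as differential graded algebras) supplies the compatibility with Yoneda composition, so that the entire extension structure, and not merely the individual groups, is matched. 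I expect this harmonic formality statement to be the hard technical core; once it is available, the extension of the equivalence from semisimple objects to the full abelian categories is formal. The projectivity of $X$ plays no essential role beyond what is already built into the cited results.
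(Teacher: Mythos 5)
The paper offers no proof of this statement: it is quoted verbatim from Simpson's \emph{Higgs bundles and local systems} \cite{Si2}, so the only meaningful comparison is with Simpson's own argument. Your high-level strategy --- both categories are $\mathbb{C}$-linear abelian of finite length, Theorem~\ref{thm0} matches the simple objects, and the equivalence is propagated to iterated extensions via an identification of $\mathrm{Ext}$ groups coming from the harmonic-bundle K\"ahler identities and formality --- is indeed the skeleton of Simpson's Section~3. But you have buried the hard point in the phrase ``the standard fact that the semistable Higgs bundles of slope $0$ with $c_1=0=c_2$ form an abelian category'' whose simple objects are the stable ones. That is Simpson's Theorem~2 of \cite{Si2}, and it is not formal: a Jordan--H\"older filtration of a semistable Higgs bundle a priori has graded pieces that are only stable Higgs \emph{sheaves} (torsion-free, possibly non-locally-free, with no control on their Chern classes), and likewise the kernel and cokernel of a morphism of semistable Higgs bundles of slope $0$ are a priori only coherent. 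To show that these subquotients are again locally free with $c_1=0=c_2$ one needs the Bogomolov--Gieseker inequality for stable Higgs sheaves, which Simpson proves by Mehta--Ramanathan-type restriction to complete-intersection curves and surfaces. This is exactly where projectivity enters, so your closing claim that ``the projectivity of $X$ plays no essential role'' is wrong --- and the paper itself flags this by stating Theorem~\ref{thm0} for compact K\"ahler manifolds but restricting Theorem~\ref{thms} to smooth projective varieties. Without this step your induction on the length of a Jordan--H\"older filtration, and hence your essential surjectivity argument, does not get started.

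A second, smaller gap: even granting the abelian structure on both sides, matching $\mathrm{Ext}^1$ groups between corresponding semisimple objects does not by itself produce a functor on the full categories; one must check that the identification of extension classes is compatible with pullback, pushout and Yoneda composition so that iterated extensions are matched coherently (Simpson's Lemma~3.5 and the surrounding discussion). You correctly identify harmonic formality as the mechanism, but note that the harmonic metric, and hence the quasi-isomorphism of the de Rham and Dolbeault complexes, exists only for the \emph{semisimple} objects; the content of the argument is precisely that this suffices to control extensions of semisimples, and that the resulting functor is then extended by d\'evissage. So the proposal is the right outline of Simpson's proof, but the two steps you describe as standard or formal are the ones that carry the mathematical weight, and one of them genuinely requires the projectivity hypothesis you propose to discard.
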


The equivalence of categories in Theorem \ref{thms} extends the one in Theorem \ref{thm0}
but by imposing the condition that $X$ is complex projective.

\begin{proposition}[{\cite{Si2}}]\label{props}
Take $X$ and $X_1$ in Theorem \ref{thm1} and Corollary \ref{cor1} to be 
smooth complex projective varieties. Then Theorem \ref{thm1} and Corollary \ref{cor1}
remain valid if polystability is replaced by semistability.
\end{proposition}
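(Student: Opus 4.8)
The plan is to deduce the semistable statements from their polystable counterparts (Theorem \ref{thm1} and Corollary \ref{cor1}) by exploiting the fact that the correspondence of Theorem \ref{thms} is an equivalence of abelian categories: the semistable Higgs bundles on $X$ with $c_1=0=c_2$ form an abelian category, closed under sub and quotient objects and under extensions, and the equivalence with flat connections is exact. Since $\beta^*$ is also exact on the relevant objects, the whole argument reduces to matching the stable (respectively, irreducible) building blocks, where Theorem \ref{thm1} applies, and then matching the extensions that glue them together.

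First I would address the semistability of $(\beta^*V,\,\beta^*\theta)$. The vanishing $c_1(\beta^*V)=0=c_2(\beta^*V)$ is immediate from functoriality of Chern classes. Choose a Jordan--H\"older filtration $0=V_0\subset V_1\subset\cdots\subset V_n=V$ of $(V,\,\theta)$ by saturated Higgs subsheaves whose successive quotients $Q_i:=V_i/V_{i-1}$ are stable Higgs bundles of slope $0$; the vanishing of the Chern classes of $V$, together with the Bogomolov-type inequality for semistable Higgs bundles, forces $c_1(Q_i)=0=c_2(Q_i)$ for each $i$. Each $Q_i$ then corresponds under Theorem \ref{thm0} to an irreducible flat connection and is in particular locally free; an easy induction shows each $V_i$ is a subbundle of $V$, so the filtration is by Higgs subbundles and $\beta^*$ keeps it exact. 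This yields a filtration of $(\beta^*V,\,\beta^*\theta)$ by Higgs subbundles $\beta^*V_i$ with quotients $\beta^*Q_i$. By Theorem \ref{thm1} each $\beta^*Q_i$, with its induced Higgs field, is polystable, hence semistable, of slope $0$. A Higgs bundle filtered by semistable Higgs subquotients all of slope $0$ is itself semistable of slope $0$, which gives the semistability of $(\beta^*V,\,\beta^*\theta)$.

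Next I would match the flat connections. Let $(E,\,\nabla)$ correspond to $(V,\,\theta)$ under Theorem \ref{thms}, and let $0=E_0\subset E_1\subset\cdots\subset E_n=E$ be the composition series of $(E,\,\nabla)$ as a flat connection, so that the subquotients $E_i/E_{i-1}$ are irreducible and correspond, via Theorem \ref{thm0}, to the stable pieces $Q_i$ above. Pulling back, $(\beta^*E,\,\beta^*\nabla)$ is the iterated extension of the connections $\beta^*(E_i/E_{i-1})$, while $(\beta^*V,\,\beta^*\theta)$ is the iterated extension of the Higgs bundles $\beta^*Q_i$. Theorem \ref{thm1} identifies the flat connection of each $\beta^*Q_i$ with $\beta^*(E_i/E_{i-1})$, so the two towers have the same subquotients. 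Because the equivalence of Theorem \ref{thms} is exact and $\beta^*$ preserves the extension classes, the extensions assembling these subquotients correspond as well; hence the flat connection associated to $(\beta^*V,\,\beta^*\theta)$ is precisely $(\beta^*E,\,\beta^*\nabla)$. This proves the semistable form of Theorem \ref{thm1}.

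Finally, the semistable form of Corollary \ref{cor1} follows quickly. If $(V,\,\theta)$ is a semistable Higgs bundle with $c_1=0=c_2$ and $(E,\,\nabla)$ is its flat connection, with monodromy $\rho$, then the triviality of $\beta_*$ forces the pulled-back representation $\rho\circ\beta_*$ to be trivial, so $(\beta^*E,\,\beta^*\nabla)$ is the trivial flat bundle $(\mathcal O_{X_1}^{\oplus r},\,d)$. By the semistable form of Theorem \ref{thm1} just established, the associated Higgs bundle $(\beta^*V,\,\beta^*\theta)$ is the Higgs bundle of this trivial connection, namely $(\mathcal O_{X_1}^{\oplus r},\,0)$; thus $\beta^*V=\mathcal O_{X_1}^{\oplus r}$ and $\beta^*\theta=0$. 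The main obstacle is the connection-matching step: unlike the polystable case of Theorem \ref{thm1}, a general semistable object carries no harmonic metric to invoke directly, so the identification must be carried out through the abelian-category (Tannakian) structure of Simpson's correspondence and the compatibility of iterated extensions with pullback; the auxiliary observation that the Jordan--H\"older filtration is by subbundles is what guarantees that $\beta^*$ remains exact and that this comparison is legitimate.
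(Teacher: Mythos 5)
The paper offers no proof of Proposition \ref{props}: it is quoted directly from \cite{Si2}, and the only argument sketched in the text (the $\rho$-equivariant harmonic map $\alpha\circ\widetilde\beta$ after Theorem \ref{thm1}) covers only the polystable/completely reducible case. So you are on your own here, and the first half of your argument is in fact sound and close to Simpson's: the Jordan--H\"older quotients $Q_i$ of a semistable Higgs bundle with vanishing $c_1,c_2$ are stable with vanishing Chern classes (Simpson's Theorem 2 in \cite{Si2}; strictly, additivity plus Bogomolov first give the numerical vanishing of $\deg Q_i$ and $\int ch_2(Q_i)\omega^{d-2}$, and the full vanishing of $c_1,c_2$ then comes from the flatness of the stable pieces), each $V_i$ is a subbundle, the filtration pulls back exactly, and a bundle filtered by slope-$0$ semistable Higgs subquotients is semistable. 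That establishes the semistability of $(\beta^*V,\beta^*\theta)$, and your deduction of the semistable Corollary \ref{cor1} from the semistable Theorem \ref{thm1} is also fine.

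The gap is exactly where you flag it, and flagging it does not close it. The sentence ``Because the equivalence of Theorem \ref{thms} is exact and $\beta^*$ preserves the extension classes, the extensions assembling these subquotients correspond as well'' is not a proof but a restatement of the claim. Two exact functors between abelian categories that agree (even naturally) on the simple objects need not agree on extensions; what you must show is that the square formed by the two pullback maps and the two identifications $\mathrm{Ext}^1_{\rm Dol}(Q_i,V_{i-1})\,\cong\,\mathrm{Ext}^1_{\rm dR}(E_i/E_{i-1},E_{i-1})$ on $X$ and on $X_1$ commutes. This is precisely the functoriality being proven, so as written the step is circular. Note also that $V_{i-1}$ is only semistable, so even the identification of these $\mathrm{Ext}^1$ groups is itself built inductively and is not directly given by a harmonic metric. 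To repair this you need an actual mechanism: either Simpson's explicit description of the isomorphism $H^1_{D''}\cong H^1_{D}$ for harmonic bundles together with its compatibility with pullback of forms (checked step by step up the filtration via the long exact sequences), or the Tannakian/pro-algebraic-group formulation of the correspondence in \cite{Si2}, under which compatibility with pullback on all objects follows from compatibility on the semisimple ones by construction. Without one of these inputs the connection-matching step, which is the heart of the proposition, remains unproven.
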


\subsection{Fujiki class $\mathcal C$ manifolds}\label{s2.2}

A compact complex manifold is said to be in {\it the Fujiki class
$\mathcal C$} if it is the image of a compact K\"ahler space under a holomorphic map \cite{Fu2}.
A result of Varouchas, \cite[Section\,IV.3]{Va}, asserts 
that a compact complex manifold $M$ belongs to Fujiki class $\mathcal C$ if and
only if there is a holomorphic map
$$
\phi\, :\, X\, \longrightarrow\, M
$$
such that
\begin{itemize}
\item $X$ is a compact K\"ahler manifold, and

\item $\phi$ is bimeromorphic.
\end{itemize}
In other words, $M$ lies in class $\mathcal C$ if and
only if it admits a compact K\"ahler modification.

\section{A descent result for vector bundles}

A holomorphic line bundle $L$ on a compact complex Hermitian manifold $(Y,\, \omega_Y)$ is called numerically 
effective if for every $\epsilon \, >\, 0$, there is a Hermitian metric $h_\epsilon$ on $L$ such that 
$\text{Curv}(L, h_\epsilon) \, \geq\, -\epsilon \omega_Y$, where $\text{Curv}(L, h_\epsilon)$ is the curvature of 
the Chern connection on $L$ for $h_\epsilon$; since $Y$ is compact, this condition does not depend on the choice of 
the Hermitian metric $\omega_Y$ \cite[Definition 1.2]{DPS}. A holomorphic vector bundle $\mathcal E$ on $Y$ is 
called numerically effective if the tautological line bundle ${\mathcal O}_{{\mathbb P}({\mathcal E})}(1)$ on
${\mathbb P}({\mathcal E})$ is numerically effective \cite[p.~305, Definition 1.9]{DPS}. A holomorphic
vector bundle ${\mathcal E}$ on $Y$ is called numerically flat if both ${\mathcal E}$ and ${\mathcal E}^*$
are numerically effective \cite[p.~311, Definition 1.17]{DPS}.

\begin{proposition}\label{propositionpullback}
Let $\phi\, :\, X\,\longrightarrow\, M$ be a proper bimeromorphic morphism between complex 
manifolds, and let $V\, \longrightarrow \,X$ be a holomorphic vector bundle such that for 
every $x\, \in\, M$, the restriction $V\vert_{\phi^{-1}(x)}$ is a numerically flat vector bundle. Then
there exists a holomorphic vector bundle $W$ on $M$ such that $V \,\simeq\, \phi^* W$.
\end{proposition}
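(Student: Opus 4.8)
The plan is to construct $W$ as a pushforward and then verify it has the required properties. Since $\phi$ is proper and bimeromorphic, there is an analytic subset $Z \subset M$ of codimension $\geq 2$ (or at least of positive codimension) over which $\phi$ fails to be an isomorphism; let $U = M \setminus Z$, so that $\phi$ restricts to a biholomorphism $\phi^{-1}(U) \to U$. Over $U$ the bundle $V$ descends trivially: set $W_U = (\phi|_{\phi^{-1}(U)})_* (V|_{\phi^{-1}(U)})$, which is a holomorphic vector bundle on $U$ with $\phi^* W_U \simeq V|_{\phi^{-1}(U)}$. The real content is to extend $W_U$ across $Z$ to a genuine vector bundle $W$ on all of $M$ and to check that the isomorphism $\phi^* W \simeq V$ persists over the exceptional locus.

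**The natural candidate** is the direct image sheaf $W := \phi_* V$. By properness of $\phi$ and Grauert's coherence theorem, $\phi_* V$ is a coherent analytic sheaf on $M$, and over $U$ it agrees with $W_U$, hence is locally free of rank $r = \operatorname{rank}(V)$ there. The key step is a local analysis near a point $x \in Z$: here I would use the hypothesis that $V|_{\phi^{-1}(x)}$ is numerically flat. The crucial input from the theory of numerically flat bundles (Demailly--Peternell--Schneider, \cite{DPS}) is that a numerically flat vector bundle on a compact complex manifold has a filtration by subbundles whose successive quotients carry flat Hermitian connections; in particular all its cohomology behaves like that of a flat bundle, and restriction of sections is well controlled. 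I expect to invoke this to show that the fibres $(\phi_* V)_x \otimes \mathbb{C}(x)$ all have the same dimension $r$, i.e. that $h^0(\phi^{-1}(x),\, V|_{\phi^{-1}(x)}) = r$ is constant in $x$, by Grauert's base-change / semicontinuity theorem combined with the numerical flatness on each fibre.

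**Once constancy of fibre dimension is established**, Grauert's base change theorem gives that $\phi_* V$ is locally free of rank $r$ on all of $M$, and that the natural base-change map $(\phi_* V)_x \otimes \mathbb{C}(x) \to H^0(\phi^{-1}(x),\, V|_{\phi^{-1}(x)})$ is an isomorphism for every $x \in M$. Setting $W := \phi_* V$, there is a canonical adjunction morphism $\phi^* W = \phi^* \phi_* V \to V$. This morphism is an isomorphism over the open set $U$ by construction. To promote it to an isomorphism everywhere, I would argue fibrewise: restricting to $\phi^{-1}(x)$, the composite realizes the evaluation of the global sections $H^0(\phi^{-1}(x),\, V|_{\phi^{-1}(x)})$, and numerical flatness of $V|_{\phi^{-1}(x)}$ forces this bundle to be trivial of rank $r$ (a numerically flat bundle that is globally generated, or whose sections span, is trivial), so the evaluation map is an isomorphism on each fibre. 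A morphism of vector bundles of the same rank that is an isomorphism on every fibre is an isomorphism, yielding $\phi^* W \simeq V$.

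**The main obstacle** I anticipate is precisely the local structure near the exceptional fibres: showing that $V$ restricted to the possibly singular, possibly positive-dimensional fibre $\phi^{-1}(x)$ is forced by numerical flatness to be trivial, so that its space of global sections has the correct constant dimension $r$. This is where the full force of the Demailly--Peternell--Schneider structure theory for numerically flat bundles must be used, together with careful control of how sections on fibres vary. The behaviour on non-reduced or reducible fibres, and the passage from fibrewise triviality to global local-freeness of $\phi_* V$, is the delicate point; everything else reduces to standard applications of Grauert's coherence and base-change theorems.
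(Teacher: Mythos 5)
Your candidate $W=\phi_*V$ is the right object a posteriori, but the two steps on which your argument actually rests are both unsupported, and one of them relies on a theorem that does not apply here. First, Grauert's semicontinuity/base-change theorem requires the morphism to be \emph{flat} (or the sheaf to be flat over the base). A proper bimeromorphic morphism between manifolds that is not an isomorphism is never flat, since flatness would force equidimensional (hence finite) fibers. So you cannot conclude local freeness of $\phi_*V$, nor the base-change isomorphism onto $H^0(\phi^{-1}(x),V|_{\phi^{-1}(x)})$, from constancy of $h^0$ on the reduced fibers. Without flatness the direct image is governed by the cohomology of \emph{all infinitesimal neighbourhoods} of the fibers (theorem on formal functions), and the complex-space-theoretic fibers of $\phi$ may be non-reduced and reducible; this is precisely where the real content of such descent statements lies (compare \cite[Theorem~1.2]{GKPT}), and your outline defers it rather than resolving it. Second, the triviality of $V|_{\phi^{-1}(x)}$ is asserted, not proved: numerical flatness does not imply triviality on a general compact complex space (a nontrivial unitary flat line bundle on an elliptic curve is numerically flat), so one must use some structural fact about the fibers of $\phi$ --- simple connectedness, vanishing of $H^1(\mathcal O)$, or rational chain connectedness --- none of which you establish; and your parenthetical ``a numerically flat bundle whose sections span is trivial'' is circular, since the existence of enough sections is exactly what is at stake.

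The paper's proof avoids both difficulties by never working with a general fiber. Using Hironaka, it reduces to the case where $\phi$ is a composition of blow-ups with smooth centers, and then to a single such blow-up. There the fibers of the exceptional divisor over the center are projective spaces, where the Demailly--Peternell--Schneider filtration \cite[Theorem~1.18]{DPS}, simple connectedness, and $H^1(\mathbb{P}^k,\mathcal O_{\mathbb{P}^k})=0$ genuinely force triviality; the descended bundle is then built explicitly, not as $\phi_*V$, but by applying Fujiki's blow-down theorem \cite[Theorem~2]{Fu1} to $\mathbb{P}(V)$ to produce a $\mathbb{P}^{r-1}$-bundle $T\to M$ and extracting $W$ from the direct image of $\mathcal O_{\mathbb{P}(V)}(1)$. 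If you want to salvage the direct-image approach, you would need to supply the formal-functions argument over non-reduced fibers yourself; as written, the proof has a genuine gap at its central step.
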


\begin{proof} Set $r\,:=\,{\text rank}(V)$.
We shall prove the proposition in three steps.

{\em Step 1. Assume that $\phi$ is the blow-up of $M$ along a smooth center $Z$.}\, This is 
well-known; for the convenience of the reader we give an argument in the spirit of the method 
of \cite{GKPT} (Sections 4 and 5). Let $E \,\subset\, X$ be the exceptional divisor of the 
blow-up. The restriction $V|_E$ is a vector bundle such that the restriction to every fiber 
of $$\phi|_E\,:\, E\,\longrightarrow \,Z$$ is numerically flat. As the fibers are projective
spaces, it can be shown that the vector bundle $V|_E$ is trivial on the fibers of $\phi|_E$.
Indeed, a numerically flat bundle admits a filtration of holomorphic subbundles such that each
successive quotient admits a unitary flat connection \cite[p.~311, Theorem 1.18]{DPS}. Since a
projective space is simply connected,
each successive quotient is actually trivial. On the other hand, an extension of a trivial bundle
bundle on ${\mathbb C}{\mathbb P}^k$ by a trivial bundle is also trivial, because
$H^1({\mathbb C}{\mathbb P}^k,\, {\mathcal O}_{{\mathbb C}{\mathbb P}^k})\,=\, 0$. 

Since $\phi|_E$ is locally 
trivial, we see that there exists a vector bundle $W_Z$ on $Z$ such that $V|_E \,\simeq\, 
(\phi|_E)^* W_Z$. Consider now the projectivised vector bundle $\pi\, :\, 
\PP(V)\,\longrightarrow\, X$. Then $$\pi^* E \,\simeq \,\PP(V|_E) \,\simeq\, \PP((\phi|_E)^* W_Z)
\,\simeq\, (\phi|_E)^*\PP(W_Z)$$
is a divisor that admits a fibration onto $\PP(W_Z)$. In fact, for any point $z \,\in\, Z$, we have
$$
\PP(V|_{\phi^{-1}(z)}) \,\simeq\, \phi^{-1}(z) \times \PP(W_{Z, z})
$$
and the fibration is given by projection onto the second factor. Since the restriction of the 
divisor $E$ to $\phi^{-1}(z)$ is anti-ample, this also holds for the restriction of $\pi^* 
E$ to the fibers of $\pi^* E\,\longrightarrow\, \PP(W_Z)$. Now we can apply a theorem of Fujiki,
\cite[p.~495, Theorem 2]{Fu1}, to see that there exist a variety $T$ and a bimeromorphic morphism 
$\widetilde{\phi}\, :\, \PP(V)\, \longrightarrow\, T$ such that $\widetilde{\phi}|_{\pi^* E}$
is the fibration $\pi^* E \,\longrightarrow\, \PP(W_Z)$ and the restriction of
it to $\PP(V) \setminus \pi^* E$ is an isomorphism. By 
construction the variety $T$ admits a morphism onto $M$ such that all the fibers are 
isomorphic to $\PP^{r-1}$; in particular, $T$ is smooth and $\PP(V)$ is the blowup of $T$ 
along $\PP(W_Z)$. The push-forward of $c_1(\mathcal O_{\PP(V)}(1))$ onto $T$ defines a 
Cartier divisor on $T$ such that the restriction to the fibers of $T \,\longrightarrow\, M$ 
is the hyperplane class. Thus the corresponding direct image sheaf defines a vector bundle $W 
\,\longrightarrow\, M$ satisfying the condition that $V \,\simeq\, \phi^* W$.

{\em Step 2. Assume that $\phi$ is the composition of smooth blowups.}\,
Set $X_0\,:=\,X$ and $X_k\,:=\,M$, and for $i \,\in \,\{1,\, \cdots, \,k\}$, let 
$\nu_i\, :\, X_{i-1}\,\longrightarrow\, X_i$ be blowups such that
$$
\phi \,= \,\nu_k \circ \cdots \circ \nu_1.
$$
Since every $\nu_1$-fiber is contained in a $\phi$-fiber, it is evident that the restriction
of $V$ to every $\nu_1$-fiber is trivial. Thus, by Step 1, there exists
a vector bundle $V_1$ on $X_1$ such that $V \,\simeq\, \nu_1^* V_1$. 

We shall now proceed by induction on $i \, \in\, \{1,\, \cdots,\, k\}$ and assume that we 
have found a vector bundle $V_i \,\longrightarrow \,X_i$ such that its pull-back to $X$ is 
isomorphic to $V$. We have to check that $V_i$ satisfies the triviality condition with 
respect to the morphism $\nu_{i+1}$: let $G \,\subset\, X_i$ be any $\nu_{i+1}$-fiber and let $Z 
\,\subset\, (\nu_i \circ \ldots \circ \nu_1)^{-1}(G)$ be an irreducible component that surjects 
onto $G$. Since $G$ is contracted by $\nu_{i+1}$, the variety $Z$ is contained in a 
$\phi$-fiber. Consequently, $V|_{Z}$ is trivial. Since
$$
V|_{Z} \,\simeq \,((\nu_i \circ \ldots \circ \nu_1)|_Z)^* (V_i|_G)\, ,
$$
this shows that $V_i|_G$ is numerically flat. Thus we can apply Step 1 to $\nu_{i+1}$.

{\em Step 3. The general case.}\,
If $\nu\,:\, X'\,\longrightarrow\, X$ is a bimeromorphic morphism, then the pull-back $\nu^* 
V$ is a vector bundle on $X'$ that satisfies the assumption with respect to the morphism 
$\phi \circ \nu$. Thus it suffices to prove the statement for $\phi \circ \nu$. Since any 
bimeromorphic morphism between manifolds is dominated by a sequence of blowups with smooth 
centers, we can assume without loss generality that $\phi$ is a composition of blowups with 
smooth centers. Therefore, the proof is completed using Step 2.
\end{proof}

\section{Nonabelian Hodge theory for Moishezon manifolds}

Let $M$ be a compact Moishezon manifold. Recall that Moishezon manifolds, defined as manifolds of maximal 
algebraic dimension, were introduced and studied by Moishezon
in \cite{Mo}, where he proved that they are birational to 
smooth complex projective manifolds \cite{Mo} (see also \cite[p.~26, Theorem~3.6]{Ue}).

\begin{definition}\label{def1}
Take a Higgs bundle $(V,\, \theta)$ on $M$ such
that $c_1(V)\,=\, 0\,=\, c_2(V)$.
The Higgs bundle $(V,\, \theta)$ is called \textit{semistable} (respectively,
\textit{polystable}) if for every
pair $(C,\, \tau)$, where $C$ is a compact connected Riemann surface and $\tau\, :\, C\,
\longrightarrow\, M$ is a holomorphic map, the pulled back Higgs bundle
$(\tau^*V,\, \tau^*\theta)$ is semistable (respectively,
polystable). Clearly, it is enough to consider only nonconstant maps $\tau$.
\end{definition}

When $M$ is a smooth complex projective variety, then semistability and
polystability according to Definition \ref{def1}
coincide with those described in Section \ref{s2}. Indeed, from
Proposition \ref{props} (respectively, Theorem \ref{thm1}) we know that a semistable
(respectively, polystability) Higgs bundle
$(V,\, \theta)$ on $M$ with $c_1(V)\,=\, 0\,=\, c_2(V)$ is semistable (respectively, polystability)
in the sense of Definition \ref{def1}. Conversely, if $(V,\, \theta)$ is a
Higgs bundle on $M$ with $c_1(V)\,=\, 0\,=\, c_2(V)$ such that it is
semistable (respectively, polystability) in the sense of Definition \ref{def1}, then it
is straightforward to deduce that $(V,\, \theta)$ is semistable (respectively, polystability).

\begin{theorem}\label{thmfm}
Let $M$ be a compact Moishezon manifold.
There is an equivalence of categories between the following two:
\begin{enumerate}
\item The objects are flat complex connections on $M$, and morphisms are connection
preserving homomorphisms.

\item Objects are Higgs bundles $(V,\, \theta)$ on $M$
satisfying the following conditions: $c_1(V)\,=\, 0\, =\, c_2(V)$, and $(V,\, \theta)$
is semistable. The morphisms are homomorphisms of Higgs bundles.
\end{enumerate}
\end{theorem}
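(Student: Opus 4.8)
The plan is to reduce the statement for the Moishezon manifold $M$ to the already-established projective case (Theorem \ref{thms}) by exploiting a projective modification and transporting objects back and forth along it using the descent result of Proposition \ref{propositionpullback}. Since $M$ is Moishezon, there is a projective manifold $X$ together with a proper bimeromorphic morphism $\phi\,:\,X\,\longrightarrow\, M$ (a resolution of a birational model; one may blow up to make $X$ smooth projective). Because $\phi$ is bimeromorphic, it induces an isomorphism $\phi_*\,:\,\pi_1(X)\,\xrightarrow{\sim}\,\pi_1(M)$ on fundamental groups, so flat connections (equivalently, representations) correspond bijectively and functorially in the two directions via $\phi^*$ and $\phi_*$. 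Thus on the ``connection'' side of the equivalence, passing between $M$ and $X$ is harmless and fully functorial.

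First I would construct the functor from side (1) to side (2). Given a flat connection $(E,\,\nabla)$ on $M$, pull it back to the completely reducible flat connection $(\phi^*E,\,\phi^*\nabla)$ on $X$ (decomposing into irreducibles as needed), and apply Theorem \ref{thms} on the projective manifold $X$ to obtain a semistable Higgs bundle $(V_X,\,\theta_X)$ on $X$ with vanishing $c_1,c_2$. The key point is to descend $(V_X,\,\theta_X)$ back to $M$. To do this I would verify the hypothesis of Proposition \ref{propositionpullback}: for each $x\,\in\, M$, the restriction $V_X|_{\phi^{-1}(x)}$ is numerically flat. This should follow from the fact that the $\phi$-fibers are contracted, so the map from any fiber (or any curve inside it) into $M$ kills fundamental group; hence by Corollary \ref{cor1} (in its semistable form, Proposition \ref{props}) the restriction of the Higgs bundle to curves in the fibers is trivial, forcing $\theta_X$ to vanish along fibers and $V_X|_{\phi^{-1}(x)}$ to be numerically flat (indeed trivial on rational curves, and an iterated extension of flat unitary pieces in general). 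Granting numerical flatness, Proposition \ref{propositionpullback} yields a holomorphic bundle $W$ on $M$ with $V_X\,\simeq\,\phi^*W$, and the Higgs field $\theta_X$, being a pullback section that vanishes on fibers, descends to a Higgs field $\theta_M$ on $W$; this produces the Higgs bundle $(W,\,\theta_M)$ on $M$, which is semistable in the sense of Definition \ref{def1} because semistability is tested on curves and every curve in $M$ lifts (up to the harmless fiber contractions) to a curve in $X$ where Theorem \ref{thms} and Proposition \ref{props} apply.

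Next I would build the functor from side (2) to side (1). Given a semistable Higgs bundle $(V,\,\theta)$ on $M$ with vanishing Chern classes (semistable per Definition \ref{def1}), pull it back to $(\phi^*V,\,\phi^*\theta)$ on $X$; curve-testing semistability on $X$ reduces, via Proposition \ref{props} and the fact that curves in $X$ map to curves or points in $M$, to the curve-test hypothesis on $M$, so $(\phi^*V,\,\phi^*\theta)$ is semistable on $X$. Applying Theorem \ref{thms} on $X$ gives a flat connection $(\phi^*E,\,\phi^*\nabla)$, and using that $\phi_*$ is an isomorphism of fundamental groups I would descend this to a flat connection $(E,\,\nabla)$ on $M$. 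Finally I would check that these two constructions are mutually inverse (up to natural isomorphism) and compatible with morphisms, which reduces to the corresponding functoriality on $X$ together with the bijectivity of $\phi^*$ at the level of objects and morphisms guaranteed by the descent statements; the coincidence of pullback-then-apply-NAH with apply-NAH-then-pullback is exactly the content of Theorem \ref{thm1} and Proposition \ref{props}, ensuring the diagram commutes.

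The main obstacle I expect is the descent of the Higgs field and the verification of well-definedness, i.e.\ checking that the bundle $V_X$ and its Higgs field genuinely arise as pullbacks and that the resulting $(W,\,\theta_M)$ on $M$ is independent of the chosen resolution $\phi$. Proving numerical flatness of $V_X|_{\phi^{-1}(x)}$ rigorously — in particular controlling the behaviour on all fibers, not merely the generic or the smooth ones, and handling possibly non-reduced or reducible fiber components — is the delicate technical heart, and it is precisely here that the interplay between Corollary \ref{cor1}, Proposition \ref{props}, and the structure theory of numerically flat bundles from \cite{DPS} is needed. Once numerical flatness is secured, Proposition \ref{propositionpullback} does the heavy lifting and the remaining verifications are formal.
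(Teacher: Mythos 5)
Your proposal follows essentially the same route as the paper: fix a projective modification $\phi\colon X\to M$, use the isomorphism $\phi_*$ on fundamental groups to move flat connections back and forth, apply Theorem \ref{thms} on $X$, verify numerical flatness of the Higgs bundle on $\phi$-fibers via Corollary \ref{cor1} and Proposition \ref{props} so that Proposition \ref{propositionpullback} descends the bundle, extend the Higgs field across the exceptional locus, and test semistability on curves in both directions. The only cosmetic slip is calling the pullback of an arbitrary flat connection ``completely reducible''; this is unnecessary since Theorem \ref{thms} applies to all flat connections, and it does not affect the argument.
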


\begin{proof}
Fix a holomorphic map $$\phi\, :\, X\, \longrightarrow\, M$$ from a smooth
complex projective variety $X$ such that $\phi$ is bimeromorphic.

Let $(E,\, \nabla)$ be a flat complex connection on $M$. Consider the flat complex
connection $(\phi^*E,\, \phi^*\nabla)$ on $X$. Let $(V,\, \theta_V)$ be the semistable
Higgs bundle on $X$ that corresponds to it by Theorem \ref{thms}.

We shall show that there is a holomorphic vector bundle $W$ on
$M$ such that $\phi^*W\, =\, V$.

Let $\beta\,:\, F' \,\longrightarrow\, X$ be the desingularization of a subvariety $F 
\,\subset\, X$ that is contained in a $\phi$-fiber. 
Using the fact that $F$ is contracted by the map $\phi$ we conclude that the homomorphism
$$\beta_*\,:\, \pi_1(F') \,\longrightarrow\, \pi_1(X)$$
induced by $\beta$ is trivial. Since Corollary \ref{cor1} remains valid when polystability is 
replaced by semistability (see Proposition \ref{props}), from Corollary \ref{cor1}(1) we know 
that $\beta^* V$ is a trivial holomorphic vector bundle. In particular, the restriction $V|_F$ is 
numerically flat. Therefore, by Proposition \ref{propositionpullback}, there is a holomorphic 
vector bundle $W$ on $M$ such that $\phi^*W\, =\, V$.

Let $U\, \subset\, M$ be the open subset over which $\phi$ is an isomorphism. The Higgs field 
$\theta_V$ produces a Higgs field on $W\vert_U$. Now by Hartogs' extension theorem, this Higgs field on
$W\vert_U$ extends to a Higgs field on $W$ over $M$; this extended Higgs field on $W$ will be
denoted by $\theta_W$.

We have $c_1(W)\,=\, 0\, =\, c_2(W)$, because $c_1(V)\,=\, 0\, =\, c_2(V)$. We shall
show that the Higgs bundle $(W,\, \theta_W)$ on $M$ is semistable.

Take any pair $(C,\, \tau)$, where $C$ is a compact connected Riemann surface and $\tau\, :\, C\,
\longrightarrow\, M$ is a nonconstant holomorphic map. Then there is a triple
$(\widetilde{C},\, \psi,\, \widetilde{\tau})$ such that
\begin{itemize}
\item $\widetilde C$ is a compact connected Riemann surface,

\item $\psi\, :\, \widetilde{C}\, \longrightarrow\, C$ is a surjective holomorphic map,

\item ${\widetilde\tau}\, :\, {\widetilde C}\, \longrightarrow\, X$ is a holomorphic map, and

\item $\phi\circ{\widetilde\tau}\,=\, \tau\circ\psi$.
\end{itemize}
{}From Theorem \ref{thm1} and Proposition \ref{props} we know that the Higgs bundle
$({\widetilde\tau}^*V,\, {\widetilde\tau}^*\theta_V)$ is semistable. Combining this
with the two facts that $\phi\circ{\widetilde\tau}\,=\, \tau\circ\psi$ and $(V,\, \theta_V)\,=\,
(\phi^*W,\, \phi^*\theta_W)$, we conclude that the Higgs bundle
$(\psi^*\tau^*W,\, \psi^*\tau^*\theta_W)$ is semistable. But this implies that
$(\tau^*W,\, \tau^*\theta_W)$ is semistable. Indeed, if a subbundle $W'\, \subset\, \tau^*W$
contradicts the semistability condition for $(\tau^*W,\, \tau^*\theta_W)$, then
$\psi^*W'\, \subset\, \psi^*\tau^*W$ contradicts the semistability condition for
$(\psi^*\tau^*W,\, \psi^*\tau^*\theta_W)$. Therefore, the Higgs bundle $(W,\, \theta_W)$ is semistable.

To prove the converse, let $(V,\, \theta)$ be a Higgs bundle on $M$
satisfying the following conditions: $c_1(V)\,=\, 0\, =\, c_2(V)$, and $(V,\, \theta)$
is semistable. Consider the Higgs bundle $(\phi^*V,\, \phi^*\theta)$ on $X$. We evidently
have $c_1(\phi^* V)\,=\, 0\, =\, c_2(\phi^* V)$. We shall prove that
$(\phi^*V,\, \phi^*\theta)$ is semistable.

Take any pair pair $(C,\, \tau_1)$, where $C$ is a compact connected Riemann surface and
$\tau_1\, :\, C\, \longrightarrow\, X$ is a holomorphic map. Set
$$
\tau\,=\, \phi\circ\tau_1\, .
$$
Therefore, the given condition that $(\tau^*V,\, \tau^*\theta)$ is semistable implies that
$(\tau^*_1\phi^*V,\, \tau^*_1\phi^*\theta)$ is semistable. But this implies that
$(\phi^*V,\, \phi^*\theta)$ is semistable with respect to any polarization on $X$. Let
$(E',\, \nabla')$ be the flat complex connection on $X$ that corresponds to
$(\phi^*V,\, \phi^*\theta)$ by Theorem \ref{thms}.
Since the map $\phi$ is bimeromorphic, the homomorphism $\phi_*\,:\, 
\pi_1(X) \,\longrightarrow \,\pi_1(M)$ induced by it is an isomorphism. Consequently,
$(E',\, \nabla')$ produces a flat complex connection on $M$.

It is straightforward to check that
the above two constructions, namely from flat connection on $M$ to Higgs bundles on $M$
and vice versa, are inverses of each other.
\end{proof}

\begin{proposition}\label{propfm}
Let $M$ be a compact Moishezon manifold.
There is an equivalence of categories between the following two:
\begin{enumerate}
\item The objects are completely reducible flat complex connections on $M$, and morphisms
are connection preserving homomorphisms.

\item Objects are Higgs bundles $(V,\, \theta)$ on $M$
such that $c_1(V)\,=\, 0\, =\, c_2(V)$, and $(\phi^*V,\, \phi^*\theta)$
is polystable; the morphisms are homomorphisms of Higgs bundles.
\end{enumerate}
\end{proposition}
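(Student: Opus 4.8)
The plan is to show that the equivalence of categories already established in Theorem \ref{thmfm} restricts to the desired equivalence between the two full subcategories described in the statement. Fix, as in the proof of Theorem \ref{thmfm}, a bimeromorphic holomorphic map $\phi\, :\, X\, \longrightarrow\, M$ with $X$ a smooth complex projective variety. Recall that under the equivalence of Theorem \ref{thmfm}, a flat connection $(E,\, \nabla)$ on $M$ is sent to a Higgs bundle $(W,\, \theta_W)$ on $M$ characterized by the property that $(\phi^*W,\, \phi^*\theta_W)$ is exactly the semistable Higgs bundle on $X$ corresponding to $(\phi^*E,\, \phi^*\nabla)$ via Theorem \ref{thms}. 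Since an equivalence of categories restricts to an equivalence between any pair of matching full subcategories, it suffices to prove that $(E,\, \nabla)$ is completely reducible if and only if $(\phi^*W,\, \phi^*\theta_W)$ is polystable on $X$.

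First I would transport complete reducibility from $M$ to $X$. Because $\phi$ is bimeromorphic between compact manifolds, the induced homomorphism $\phi_*\, :\, \pi_1(X)\, \longrightarrow\, \pi_1(M)$ is an isomorphism; this was already used in the proof of Theorem \ref{thmfm}. Under the Riemann--Hilbert correspondence, $(E,\, \nabla)$ corresponds to a representation $\rho$ of $\pi_1(M)$ while $(\phi^*E,\, \phi^*\nabla)$ corresponds to $\rho\circ\phi_*$. As $\phi_*$ is an isomorphism, the images $\rho(\pi_1(M))$ and $(\rho\circ\phi_*)(\pi_1(X))$ coincide and act on the same ${\mathbb C}^r$, so the two representations have identical lattices of invariant subspaces. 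Hence $(E,\, \nabla)$ is completely reducible if and only if $(\phi^*E,\, \phi^*\nabla)$ is completely reducible.

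Next I would invoke, on the smooth projective variety $X$, the compatibility between Theorem \ref{thms} and Theorem \ref{thm0}: under the correspondence of Theorem \ref{thms}, a flat connection on $X$ is completely reducible precisely when its associated semistable Higgs bundle is polystable. Applying this to $(\phi^*E,\, \phi^*\nabla)$ and its associated bundle $(\phi^*W,\, \phi^*\theta_W)$ shows that $(\phi^*E,\, \phi^*\nabla)$ is completely reducible if and only if $(\phi^*W,\, \phi^*\theta_W)$ is polystable. Chaining this with the previous step yields the required characterization. For the converse direction one also checks, exactly as in the proof of Theorem \ref{thmfm} using Theorem \ref{thm1}, Proposition \ref{props}, and pullback along a triple $(\widetilde{C},\, \psi,\, \widetilde{\tau})$, that any $(V,\, \theta)$ on $M$ with $c_1(V)\,=\,0\,=\,c_2(V)$ and $(\phi^*V,\, \phi^*\theta)$ polystable is in particular semistable in the sense of Definition \ref{def1}, so that it genuinely lies in category (2) of Theorem \ref{thmfm} and the restriction argument applies to it.

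The main obstacle I anticipate is the faithful transport of complete reducibility across $\phi$, which rests entirely on $\phi_*$ being an isomorphism of fundamental groups, together with the fact that polystability on $X$ is independent of the polarization for Higgs bundles with vanishing $c_1$ and $c_2$ (Remark \ref{rem1}), so that the condition imposed in part (2) is well posed. Once these inputs are in place, the remaining verifications — that morphisms correspond and that the two constructions are mutually inverse — are inherited directly from Theorem \ref{thmfm} and require no additional argument.
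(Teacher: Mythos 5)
Your proposal is correct, but it reaches the conclusion by a somewhat different route than the paper. The paper essentially re-runs the proof of Theorem \ref{thmfm} and establishes the bridge ``completely reducible on $X$ $\Longleftrightarrow$ polystable on $X$'' by hand: for the direction from Higgs bundles to connections it restricts to a curve $\tau_1(C)\subset X$ that is a complete intersection of very ample hypersurfaces, so that $\tau_{1*}$ is surjective on fundamental groups and complete reducibility of $(E,\nabla)$ can be read off from polystability of the restricted Higgs bundle; for the other direction it verifies polystability of $(W,\theta_W)$ curve by curve using the lifting triples $(\widetilde C,\psi,\widetilde\tau)$ and the finite-index image of $\psi_*$. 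You instead organize the whole argument as the restriction of the equivalence of Theorem \ref{thmfm} to matching full subcategories, and you replace the Lefschetz-type curve argument by a single appeal to the fact that the correspondence of Theorem \ref{thms} extends that of Theorem \ref{thm0} (a compatibility the paper itself records just after Theorem \ref{thms}), so that on the projective model $X$ a flat connection is completely reducible exactly when its associated semistable Higgs bundle is polystable. Your version is shorter and makes the logical dependence cleaner, at the cost of using that compatibility as a black box; the paper's version is more self-contained and, as a by-product, shows the stronger statement that $(W,\theta_W)$ is polystable in the curve-theoretic sense of Definition \ref{def1}, not merely that its pullback to $X$ is polystable. You were also right to flag, and to close, the one genuine gap in the restriction strategy: one must check that every object of category (2) of the Proposition actually lies in category (2) of Theorem \ref{thmfm}, i.e.\ that polystability of $(\phi^*V,\phi^*\theta)$ forces semistability of $(V,\theta)$ in the sense of Definition \ref{def1}; your reduction of this to Theorem \ref{thm1}, Proposition \ref{props} and the lifting triples is exactly the mechanism the paper uses.
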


\begin{proof}
The proof is very similar to the proof of Theorem \ref{thmfm}.

Let $(V,\, \theta)$ be a Higgs bundle on $M$ such that $c_1(V)\,=\, 0\, =\, c_2(V)$ and 
$(\phi^*V,\, \phi^*\theta)$ is polystable. Take any pair pair $(C,\, \tau_1)$, where $C$ is a 
compact connected Riemann surface and $\tau_1\, :\, C\, \longrightarrow\, X$ is a holomorphic 
map. Setting $\tau\,=\, \phi\circ\tau_1$ we conclude that $(\tau^*V,\, \tau^*\theta)\,=\, 
(\tau^*_1\phi^*V,\, \tau^*_1\phi^*\theta)$ is polystable. This implies that $(\phi^*V,\, 
\phi^*\theta)$ is semistable. Let $(E,\, \nabla)$ be the complex flat connection on $X$ that 
corresponds to $(\phi^*V,\, \phi^*\theta)$ by Theorem \ref{thms}. If $\tau_1(C)$ is an intersection of 
very ample hypersurfaces on $X$, the homomorphism of fundamental groups induced by $\tau_1$
$$
\tau_{1*}\, :\, \pi_1(C,\, x_0)\, \longrightarrow\, \pi_1(X,\, x_0)
$$
is surjective. Since $(\tau^*_1\phi^*V,\, \tau^*_1\phi^*\theta)$ is polystable, the restriction
of $(E,\, \nabla)$ to $\tau_1(C)$ is completely reducible. Now from the surjectivity of
$\tau_{1*}$ it follows immediately that $(E,\, \nabla)$ is completely reducible on $X$.
Since the homomorphism $\phi_*\,:\, \pi_1(X) \,\longrightarrow \,\pi_1(M)$ induced by $\phi$ is
an isomorphism, the completely reducible complex flat connection $(E,\, \nabla)$ on $X$
produces a completely reducible complex flat connection on $M$.

To prove the converse, let $(E,\, \nabla)$ be a completely reducible complex flat connection on 
$M$. Since the homomorphism $\phi_*\,:\, \pi_1(X) \,\longrightarrow \,\pi_1(M)$ induced by 
$\phi$ is an isomorphism, the pulled back flat connection $(\phi^*E,\, \phi^*\nabla)$ is 
completely reducible. Let $(V,\, \theta_V)$ be the polystable Higgs bundle on $X$ corresponding 
to $(\phi^*E,\, \phi^*\nabla)$. As in the proof of Theorem \ref{thmfm}, there is a Higgs bundle 
$(W,\, \theta_W)$ on $M$ such that
$$
(\phi^*W,\, \phi^*\theta_W)\, =\, (V,\, \theta_V)
$$
and $c_1(W)\,=\, 0\,=\, c_2(W)$.

In the proof of Theorem \ref{thmfm} it was shown that $(W,\, \theta_W)$ is semistable.
To complete the proof we need to show that $(W,\, \theta_W)$ is polystable.

Take any pair $(C,\, \tau)$, where $C$ is a compact connected Riemann surface and $\tau\, :\, C\,
\longrightarrow\, M$ is a nonconstant holomorphic map. There is a triple
$(\widetilde{C},\, \psi,\, \widetilde{\tau})$ such that
\begin{itemize}
\item $\widetilde C$ is a compact connected Riemann surface,

\item $\psi\, :\, \widetilde{C}\, \longrightarrow\, C$ is a surjective holomorphic map,

\item ${\widetilde\tau}\, :\, {\widetilde C}\, \longrightarrow\, X$ is a holomorphic map, and

\item $\phi\circ{\widetilde\tau}\,=\, \tau\circ\psi$.
\end{itemize}
We know that $({\widetilde\tau}^*\phi^*W,\, {\widetilde\tau}^*\phi^*\theta_W)$ is
polystable and the corresponding flat connection, namely
$({\widetilde\tau}^*\phi^*E,\, {\widetilde\tau}^*\phi^*\nabla)$, is
completely reducible. For the homomorphism of fundamental groups induced by $\psi$
$$
\psi_*\, :\, \pi_1(\widetilde{C})\, \longrightarrow\, \pi_1(C)
$$ 
the image is a finite index subgroup. This implies that the flat connection
$(\tau^*E,\, \tau^*\nabla)$ is completely reducible. Therefore, the Higgs bundle
$(\tau^*W,\, \tau^*\theta_W)$ is polystable, so $(W,\, \theta_W)$ is polystable.
This completes the proof.
\end{proof}

\section{Nonabelian Hodge theory for Fujiki class $\mathcal C$ manifolds}

Let $M$ be a compact connected complex manifold lying in Fujiki class $\mathcal C$. Fix a
bimeromorphic map
\begin{equation}\label{xf}
\phi\, :\, X\, \longrightarrow\, M
\end{equation}
such that $X$ is compact K\"ahler. Let $(V, \,\theta)$ be a Higgs bundle on $M$ such that
$c_1(V)\,=\, 0\,=\, c_2(V)$. Further assume that the pulled back Higgs bundle $(\phi^*V,\, \phi^*\theta)$
is polystable. As noted in Remark \ref{rem1}, this condition is independent of the choice of the
K\"ahler form on $X$.

\begin{lemma}\label{lem1}
Let $f\, :\, Y\, \longrightarrow\, M$ be a holomorphic map from a compact K\"ahler manifold $Y$ such
that $f$ is bimeromorphic. Then the pulled back Higgs bundle $(f^*V,\, f^*\theta)$ is also polystable.
\end{lemma}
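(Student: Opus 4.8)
The plan is to compare the two modifications $\phi\colon X\to M$ and $f\colon Y\to M$ through a single compact Kähler resolution dominating both. First I would form the bimeromorphic map $h:=f^{-1}\circ\phi\colon X\dashrightarrow Y$ and eliminate its indeterminacies: by Hironaka's theorem (the same resolution result invoked in Step~3 of Proposition \ref{propositionpullback}) there is a finite sequence of blow-ups with smooth centers $p\colon Z\to X$ such that $h\circ p$ extends to a holomorphic map $q\colon Z\to Y$. Since $X$ is Kähler and a blow-up of a compact Kähler manifold along a smooth center is again compact Kähler, the manifold $Z$ is compact Kähler. Both $p$ and $q=h\circ p$ are bimeromorphic, and by construction $\phi\circ p=f\circ q$.

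Next I would transport the polystability hypothesis from $X$ to $Z$. As $(\phi^*V,\,\phi^*\theta)$ is polystable on the compact Kähler manifold $X$ and $p\colon Z\to X$ is a holomorphic map of compact Kähler manifolds, Theorem \ref{thm1} shows that $(p^*\phi^*V,\,p^*\phi^*\theta)$ is polystable on $Z$. Using $\phi\circ p=f\circ q$, this Higgs bundle is precisely $(q^*f^*V,\,q^*f^*\theta)$, so $q^*(f^*V,\,f^*\theta)$ is polystable on $Z$. It then remains to descend polystability along the bimeromorphic morphism $q\colon Z\to Y$.

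For the descent I would use the Corlette--Simpson correspondence twice. The polystable Higgs bundle $q^*(f^*V,\,f^*\theta)$ on $Z$ (which has $c_1=0=c_2$, these pulling back from $V$) corresponds by Theorem \ref{thm0} to a completely reducible representation $\rho_Z$ of $\pi_1(Z)$. Since $q$ is bimeromorphic, $q_*\colon\pi_1(Z)\to\pi_1(Y)$ is an isomorphism, so $\rho_Z$ factors as $\rho_Y\circ q_*$ for a representation $\rho_Y$ of $\pi_1(Y)$ having the same image in $\mathrm{GL}(r,\mathbb C)$; hence $\rho_Y$ is completely reducible as well. Let $(W',\,\eta')$ be the polystable Higgs bundle on $Y$ associated to $\rho_Y$ by Theorem \ref{thm0}. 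Applying the functoriality in Theorem \ref{thm1} to $q$ gives $q^*(W',\,\eta')\cong q^*(f^*V,\,f^*\theta)$ as Higgs bundles on $Z$, since both correspond to $\rho_Z=\rho_Y\circ q_*$ and the correspondence on $Z$ is an equivalence.

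Finally I would show that this isomorphism descends to $Y$, giving $(f^*V,\,f^*\theta)\cong(W',\,\eta')$, which is polystable. Let $U\subset Y$ be the dense open subset over which $q$ is an isomorphism; as $Y$ is smooth, the complement $S=Y\setminus U$ has codimension at least $2$. Over $U$ the isomorphism $q^*(W',\,\eta')\cong q^*(f^*V,\,f^*\theta)$ pushes down to an isomorphism $(W',\,\eta')|_U\cong(f^*V,\,f^*\theta)|_U$. Since $\mathcal{H}om(W',\,f^*V)$ is locally free, hence reflexive, this homomorphism and its inverse extend across $S$ by Hartogs' extension theorem, and the extensions remain mutually inverse and compatible with the Higgs fields by density of $U$; thus $(f^*V,\,f^*\theta)$ is polystable. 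The main obstacle is exactly this descent step: one must both produce a candidate polystable object on $Y$—which I do on the representation-theoretic side, where the bimeromorphic invariance of $\pi_1$ is decisive—and verify that the comparison isomorphism extends over the codimension $\geq 2$ locus where $q$ fails to be an isomorphism.
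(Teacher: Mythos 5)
Your proof is correct and follows essentially the same route as the paper: dominate $\phi$ and $f$ by a common compact K\"ahler manifold $Z$, push the polystability up to $Z$ via Theorem \ref{thm1}, pass to the associated completely reducible representation, use that a bimeromorphic morphism induces an isomorphism on $\pi_1$ to descend to $Y$, and compare the two Higgs bundles on $Y$ after pulling back to $Z$. The only differences are cosmetic: you build $Z$ by resolving the indeterminacy of $f^{-1}\circ\phi$ through blow-ups of $X$ (which makes the K\"ahlerness of $Z$ immediate) rather than desingularizing the dominant component of $Y\times_M X$, and you spell out the final codimension-two/Hartogs argument for descending the comparison isomorphism along $q$, a step the paper's proof leaves implicit.
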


\begin{proof}
Consider the irreducible component of the fiber product $Y\times_M X$ that dominates $M$. Let 
$Z$ be a desingularization of it. Let $p_Y$ and $p_X$ be the natural projections of $Z$ to $Y$ and $X$
respectively.

Since $(\phi^*V,\, \phi^*\theta)$ is polystable with $c_1(\phi^*V)\,=\, 0\, =\, c_2(\phi^*V)$, from
Theorem \ref{thm1} we know that $(p^*_X\phi^*V,\, p^*_X\phi^*\theta)$ is polystable; as before, this
condition is independent of the choice of the K\"ahler form on $Z$. The Higgs
bundle $(p^*_Y f^*V,\, p^*_Y f^*\theta)$ is polystable, because
$$
(p^*_X\phi^*V,\, p^*_X\phi^*\theta)\,=\, (p^*_Y f^*V,\, p^*_Y f^*\theta)\, .
$$
Let $(W,\, \nabla)$ be the completely reducible flat complex connection on $Z$.

The homomorphism $p_{Y*}\, :\, \pi_1(Z,\, z_0)\, \longrightarrow\, \pi_1(Y,\, p_Y(z_0))$
induced by $p_Y$ is an isomorphism, because the map $p_Y$ is bimeromorphic. So $(W,\, \nabla)$
gives a completely reducible flat complex connection $(W',\, \nabla')$ on $Y$. The Higgs bundle on $Y$
corresponding to $(W',\, \nabla')$ is isomorphic to $(f^*V,\, f^*\theta)$. In particular,
$(f^*V,\, f^*\theta)$ is polystable.
\end{proof}

From Lemma \ref{lem1} it follows that the second category in the following theorem is independent of
the choice of the pair $(X,\, \phi)$.

\begin{theorem}\label{thm2}
Let $M$ be a compact connected complex manifold lying in Fujiki class $\mathcal C$.
There is an equivalence of categories between the following two:
\begin{enumerate}
\item The objects are completely reducible flat complex connections on $M$, and morphisms are connection
preserving homomorphisms.

\item Objects are Higgs bundles $(V,\, \theta)$ on $M$
such that $c_1(V)\,=\, 0\, =\, c_2(V)$, and $(\phi^*V,\, \phi^*\theta)$
is polystable; the morphisms are homomorphisms of Higgs bundles.
\end{enumerate}
\end{theorem}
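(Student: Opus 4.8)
The plan is to exploit the fixed bimeromorphic map $\phi\, :\, X\, \longrightarrow\, M$, with $X$ compact K\"ahler, and to transport objects back and forth between $X$ and $M$ along $\phi$. The decisive structural fact is that, $\phi$ being a bimeromorphic morphism of manifolds, the induced homomorphism $\phi_*\, :\, \pi_1(X)\, \longrightarrow\, \pi_1(M)$ is an isomorphism; consequently completely reducible flat connections on $M$ and on $X$ correspond bijectively under pullback and descent. The essential simplification over the Moishezon case of Proposition \ref{propfm} is that here $X$ is already K\"ahler, so the Corlette--Simpson correspondence of Theorem \ref{thm0} may be invoked directly on $X$, with no need for the curve-restriction criteria of Definition \ref{def1}.

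For the direction sending a Higgs bundle to a flat connection, let $(V,\, \theta)$ lie in category (2), so that $c_1(V)\,=\,0\,=\,c_2(V)$ and $(\phi^*V,\, \phi^*\theta)$ is polystable on the K\"ahler manifold $X$. Theorem \ref{thm0} associates to this polystable Higgs bundle a completely reducible flat connection $(E',\, \nabla')$ on $X$; since $\phi_*$ is an isomorphism, $(E',\, \nabla')$ descends to a completely reducible flat connection on $M$. This direction is essentially immediate.

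The reverse direction, producing a Higgs bundle on $M$ from a completely reducible flat connection $(E,\, \nabla)$ on $M$, carries the bulk of the content. Since $\phi_*$ is an isomorphism, the pullback $(\phi^*E,\, \phi^*\nabla)$ is completely reducible on $X$, and Theorem \ref{thm0} produces a polystable Higgs bundle $(V,\, \theta_V)$ on $X$ with $c_1(V)\,=\,0\,=\,c_2(V)$. The principal task is then to descend the underlying vector bundle $V$ along $\phi$. To this end I would show that for every subvariety $F$ contained in a $\phi$-fiber the restriction $V|_F$ is numerically flat: choosing a desingularization $\beta\, :\, F'\, \longrightarrow\, X$ of $F$, the composite $\phi\circ\beta$ is constant, so $\phi_*\circ\beta_*\,=\,0$, and the injectivity of $\phi_*$ forces $\beta_*\, :\, \pi_1(F')\, \longrightarrow\, \pi_1(X)$ to be trivial; Corollary \ref{cor1}(1) then gives that $\beta^*V$ is trivial, whence $V|_F$ is numerically flat. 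Applying this with $F$ ranging over the fibers of $\phi$, Proposition \ref{propositionpullback} delivers a holomorphic vector bundle $W$ on $M$ with $\phi^*W\,=\,V$. I would then carry the Higgs field over the open set $U\subset M$ on which $\phi$ is an isomorphism and extend it across all of $M$ by Hartogs' theorem, obtaining $\theta_W$. Because $\phi^*W\,=\,V$ one has $c_1(W)\,=\,0\,=\,c_2(W)$, and by construction $(\phi^*W,\, \phi^*\theta_W)\,=\,(V,\, \theta_V)$ is polystable, so $(W,\, \theta_W)$ lies in category (2).

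The step I expect to be the main obstacle is exactly this descent of the vector bundle: verifying numerical flatness along the fibers and then invoking Proposition \ref{propositionpullback}, which is the one place where the actual geometry of the contraction $\phi$ enters rather than mere formal functoriality. Once $W$ is constructed, checking that the two assignments are mutually quasi-inverse functors is formal: on objects both statements reduce, after pulling back to $X$, to the equivalence of Theorem \ref{thm0} together with the identification $\phi^*W\,=\,V$ and the isomorphism $\phi_*$; on morphisms one uses the functoriality of pullback and of Theorem \ref{thm0}, noting that every homomorphism on $M$ is determined by its pullback to $X$ since $\phi$ is bimeromorphic. Finally, Lemma \ref{lem1} already guarantees that category (2) is independent of the chosen pair $(X,\, \phi)$, so the equivalence is well posed.
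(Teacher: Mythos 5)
Your proposal is correct and follows essentially the same route as the paper: identify flat connections on $M$ and $X$ via the isomorphism $\phi_*$, apply Theorem \ref{thm0} on the K\"ahler model $X$, descend the underlying bundle via Corollary \ref{cor1} and Proposition \ref{propositionpullback}, and extend the Higgs field by Hartogs. The only point you compress is the bijection on morphism spaces, which the paper makes explicit by translating Higgs-bundle homomorphisms into connection-preserving homomorphisms and descending those through $\phi_*$; your sketch implicitly does the same.
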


\begin{proof}
The homomorphism of fundamental groups induced by $\phi$
$$
\phi_*\, :\, \pi_1(X,\, x_0)\, \longrightarrow\, \pi_1(M,\, \phi(x_0))
$$
is an isomorphism, because $\phi$ is bimeromorphic. Consequently, the operation of
pullback, to $X$, of flat vector bundles on $M$ identifies the flat bundles on $M$ with those
on $X$. Also, connection preserving homomorphisms between two flat bundles on $M$ coincide
with connection preserving homomorphisms between their pullback to $X$.

Let $(V_1,\, \theta_1)$ be a polystable Higgs bundle on $X$ with
$c_1(V_1)\,=\,0\,=\, c_2(V_1)$. Then, as shown in the proof
of Theorem \ref{thmfm}, using Proposition \ref{propositionpullback} the vector bundle $V_1$ descends to
a holomorphic vector bundle on $M$, meaning there exists a bundle $W_1$ on $M$ such that $V_1\,=\, \phi^*W_1$.
Since $c_1(V_1)\,=\,0\,=\, c_2(V_1)$, this implies that $c_1(W_1)\,=\,0\,=\, c_2(W_1)$.

Let $U\, \subset\, X$ be the open subset over which $\phi$ is an isomorphism.

The Higgs field $\theta_1$ defines a Higgs field on $W_1\vert_{\phi(U)}$.
Again using Hartogs' extension theorem this Higgs field on $W_1\vert_{\phi(U)}$ extends to a Higgs
field on $W_1$; this extended Higgs field on $W_1$ will be denoted by $\theta'$. It is evident that
$(\phi^*W_1,\, \phi^*\theta')\,=\, (V_1,\, \theta_1)$.

If $(W_2,\, \theta'')$ is a polystable Higgs bundle on $M$ with
$c_1(W_2)\,=\,0\,=\, c_2(W_2)$, then it can be shown that
\begin{equation}\label{st}
H^0(M,\, \text{Hom}((W_1,\, \theta'),\, (W_2,\, \theta'')))\,=\, H^0(X,\,
\text{Hom}((V_1,\, \theta_1),\, (\phi^*W_2,\, \phi^*\theta'')))\, .
\end{equation}
To prove this, let $(E_1,\, \nabla_1)$ (respectively, $(E_2,\, \nabla_2)$) be the completely reducible
flat complex connection on $M$ corresponding to $(W_1,\, \theta')$ (respectively, $(W_2,\, \theta'')$). Then
$$
H^0(X,\, \text{Hom}((V_1,\, \theta_1),\, (\phi^*W_2,\, \phi^*\theta'')))\,=\,
H^0(X,\, \text{Hom}((\phi^*E_1,\, \phi^*\nabla_1),\, (\phi^*E_2,\, \phi^*\nabla_2)))\, .
$$
But
$$
H^0(X,\, \text{Hom}((\phi^*E_1,\, \phi^*\nabla_1),\, (\phi^*E_2,\, \phi^*\nabla_2)))\,=\,
H^0(M,\, \text{Hom}((E_1,\, \nabla_1),\, (E_2,\, \nabla_2)))\, .
$$
Hence from the isomorphism
$$
H^0(M,\, \text{Hom}((E_1,\, \nabla_1),\, (E_2,\, \nabla_2)))\,=\,
H^0(M,\, \text{Hom}((W_1,\, \theta'),\, (W_2,\, \theta'')))
$$
we conclude that \eqref{st} holds. This completes the proof.
\end{proof}

It is rather routine to check that the results on \cite{BG} extend to the context
of Fujiki class $\mathcal C$ manifolds.

\section*{Acknowledgements}

We are grateful to Andreas H\"oring who helped us providing the proof of Proposition \ref{propositionpullback}.
We also thank Yohan Brunebarbe and Carlos Simpson for useful conversations on the subject.

This work has been supported by the French government through the UCAJEDI Investments in the 
Future project managed by the National Research Agency (ANR) with the reference number 
ANR2152IDEX201. The first-named author is partially supported by a J. C. Bose Fellowship, and 
school of mathematics, TIFR, is supported by 12-R$\&$D-TFR-5.01-0500. 


\end{document}